\newcommand{\Ob}{\operatorname{Ob}}
\newcommand{\Hom}{\operatorname{Hom}}
\newcommand{\id}{\operatorname{id}}
\newcommand{\cat}[1]{\mathbf{#1}}
\newcommand{\norm}[1]{\|{#1}\|}
\newcommand{\V}[1]{\vec{#1}}
\begin{document}
\title{Categorical magnitude and entropy\thanks{SC acknowledges the support of Marcella Bonsall through her SURF fellowship. }}  
%
%
\author{Stephanie Chen\inst{1}
\and
Juan Pablo Vigneaux\inst{2}
}

\authorrunning{S. Chen and J. P. Vigneaux}
%
\institute{California Institute of Technology, Pasadena CA 91125, USA\\
\email{schen7@caltech.edu} \and
Department of Mathematics, California Institute of Technology, \\ Pasadena CA 91125, USA\\
\email{vigneaux@caltech.edu}}
\maketitle              
\begin{abstract}
Given any finite set equipped with a probability measure, one may compute its Shannon entropy or information content. The entropy becomes the logarithm of the cardinality of the set when the uniform probability is used.  Leinster introduced a notion  of Euler characteristic for certain finite categories, also known as magnitude, that can be seen as a categorical generalization of cardinality. This paper aims to connect the two ideas by considering the extension of Shannon entropy to finite categories endowed with probability, in such a way that the magnitude is recovered when a certain choice of ``uniform'' probability is made. 

\keywords{Entropy  \and Magnitude \and Categories \and Information measure \and Topology}
\end{abstract}

This version of the article has been accepted for publication, after peer review (when applicable) and is subject to Springer Nature’s AM terms of use but is not the Version of Record and does not reflect post-acceptance improvements, or any corrections. The Version of Record is available online at: https://doi.org/10.1007/978-3-031-38271-0\_28. 

\section{Introduction}
Given a finite set $X$ endowed with a probability measure $p$, its Shannon entropy \cite{Shannon} is given by
\begin{equation}\label{eq:entropy}
H(p) = -\sum_{x\in X} p(x)\ln p(x). 
\end{equation}
In particular, taking the uniform probability $u:x\mapsto 1/|X|$ yields $H(u) = \ln|X|$.  We may thus view Shannon entropy as a \emph{probabilistic} generalization of cardinality. A \emph{categorical} generalization of cardinality may be found in the Euler characteristic or magnitude of finite ordinary categories \cite{Leinster}, defined as follows. 

Let $\cat{A}$ be a finite category. The zeta function $\zeta: \Ob(\cat{A}) \times \Ob(\cat{A}) \to \mathbb{Q}$ is given by $\zeta(x,y) = |\text{Hom}(x,y)|$, the cardinality of the hom-set, for any $x,y\in \Ob(\cat{A})$. A weighting on $\cat{A}$ is a function $k^{\bullet}: \Ob(\cat{A}) \to \mathbb{Q}$ such that 
\begin{equation}
\label{eq: weighting}
\sum_{b \in \Ob(\cat{A})} \zeta(a,b)k^b = 1
\end{equation}
for all $a \in \Ob(\cat{A})$. Similarly, a coweighting on $\cat{A}$ is a function $k_{\bullet}: \Ob(\cat{A}) \to \mathbb{Q}$ such that 
\begin{equation}
\label{eq: coweighting}
\sum_{b \in \Ob(\cat{A})} \zeta(b,a)k_b = 1
\end{equation}
for all $a \in \Ob(\cat{A})$. Equivalently, one may view a coweighting on $\cat{A}$ as a weighting on $\cat{A}^{\text{op}}$. If $\cat{A}$ admits both a weighting and a coweighting, then
\begin{equation}\sum_{a\in\Ob(\cat{A})} k^a = \sum_{a\in\Ob(\cat{A})} k_a;
\end{equation}
in this case, the \emph{magnitude} of $\cat A$,   denoted $\chi(\cat A)$, is defined as the common value of both sums. 

Magnitude enjoys  algebraic properties reminiscent of cardinality, such as $\chi(\cat A\coprod \cat B) = \chi(\cat A)+\chi(\cat B)$ and $\chi(\cat A \times \cat B) =\chi(\cat A)\chi(\cat B)$. Moreover, when $\cat A$ is a discrete category (i.e. it only has identity arrows),  $\chi(\cat A) = |\Ob (\cat A)|$.  Hence magnitude may be regarded as a categorical generalization of cardinality.

We ask if there is an extension of Shannon entropy from finite sets to finite categories that gives a probabilistic generalization of the magnitude; in particular, we want this extension to give us the logarithm of the magnitude under some ``uniform'' choice of probabilities and to coincide with Shannon entropy when specialized to discrete categories.

The rest of this paper is organized as follows. In Section 2, we introduce the category $\cat{ProbFinCat}$ whose objects are categorical probabilistic triples $(\cat{A},p,\phi)$ and whose morphisms are probability-preserving functors. In Section \ref{sec:entropy}, we define a function $\mathcal{H}$ of categorical probabilistic triples that shares analogous properties to those used by Shannon \cite{Shannon} to characterize the entropy \eqref{eq:entropy}. This function $\mathcal H$ allows us to recover the set-theoretical Shannon entropy and the categorical magnitude for particular choices of $p$ or $\phi$.  In Section 4,  we discuss  the possibility of characterizing the ``information loss'' given by $\mathcal H$ in the spirit of \cite{Entropy_Characterization}.

\section{Probabilistic categories}\label{sec:prob_cat}

\begin{definition}\label{def:cat_prob_triples}
    A \emph{categorical probabilistic triple}  $(\cat{A},p,\phi)$ consists of
\begin{enumerate}
    \item   a finite category $\cat A$,
    \item a probability $p$ on $\Ob (\cat A)$, and
    \item  a function $\phi:\Ob (\cat A)\times \Ob (\cat A)\to[0,\infty)$ 
 such that $\phi(a,a) > 0$ for all objects $a$ of $\cat{A}$, and $\phi(b,b')=0$ whenever there is no arrow from $b$ to $b'$ in $\cat A$.
\end{enumerate}
\end{definition}

The definition gives a lot of flexibility for $\phi$, provided it reflects the incidence relations in the category. It might be the $\zeta$ function introduced above. Alternatively, it might be a measure of similarity between two objects, see next section. Finally, it might be a transition kernel, in which case for every $a\in \Ob (\cat A)$, the function $\phi(\cdot,a)$ is a probability mass function on the objects $b$ such that an arrow $a\to b$ exists in $\cat A$; we treat this case in more detail in Section \ref{sec:special_cases}.

\begin{remark}
    Given a categorical probabilistic triple $(\cat{A}, p, \phi)$, 
set $N=|\Ob (\cat A)|$ and enumerate the objects of $\cat{A}$, in order to  introduce a matrix $Z_{\phi}$ of size $N\times N$ whose $(i,j)$-component $(Z_{\phi})_{ij}$ is $\phi(a_i,a_j)$. A linear system $\V f = Z_\phi \V g$ expresses each $f(a_i)$ as $\sum_{a_j:a_i\to a_j} \phi(a_i,a_j)g(a_j)$. In certain cases the matrix $Z_\phi$ can be inverted to express $\vec g$ as a function of $\vec f$. For instance, when $\cat A$ is a poset and $\phi=\zeta$, the matrix $Z=Z_\zeta$ is invertible and its inverse is known as the M\"obius function; it was introduced by Rota in \cite{Rota}, as a generalization of the number-theoretic M\"obius function. Similarly, if $\phi$ is a probabilistic transition kernel and $Z_\phi$ is invertible, this  process might be seen as an inversion of a system of conditional expectations.
\end{remark}

We define a category $\cat{ProbFinCat}$ of probabilistic (finite) categories whose objects are categorical probabilistic triples. A morphism $F: (\cat{A}, p, \phi) \to (\cat{B}, q, \theta)$ in $\cat{ProbFinCat}$ is given by a functor $F: \cat{A} \to \cat{B}$ 
such that for all $b \in \Ob(\cat{B})$,
\begin{equation}\label{eq:push-forward-probas}
q(b) = F_{*}p(b) = \sum_{a\in F^{-1}(b)} p(a).
\end{equation}
and for all $b,b' \in \Ob(\cat{B})$,
\begin{equation}\label{eq:image_kernel}
\theta(b,b') = F_{*}\phi(b,b') = 
\begin{cases}
\frac{\sum_{a'\in F^{-1}(b')}p(a')\sum_{a\in F^{-1}(b)}\phi(a,a')}{F_{*}p(b')} & F_{*}p(b')>0\\
1 & b=b', F_{*}p(b') = 0\\
0 & b\neq b', F_{*}p(b') = 0
\end{cases}.
\end{equation}
Remark that \eqref{eq:push-forward-probas} corresponds to the push-forward of probabilities under the function induced by $F$ on objects. In turn, when $b\mapsto \phi(b,a)$ is a probability of transition,  \eqref{eq:image_kernel} 
gives a probability of a transition $\theta(b,b')$ from $b'$ to $b$ in $\cat B$ as a weighted average of all the transitions from preimages of $b'$ to preimages of $b$. 

Lemma \ref{lem:well-defined-functor}  shows that the function $F_*\phi$ defined by \eqref{eq:image_kernel} is compatible with our definition of a categorical probabilistic triple. Lemma \ref{lem:functoriality} establishes the functoriality of \eqref{eq:push-forward-probas} and \eqref{eq:image_kernel}. 

\begin{lemma}\label{lem:well-defined-functor}
    Let $(\cat{A}, p, \phi)$ be a categorical probabilistic triple, $\cat B$ a finite category, and. $F:\cat A\to \cat B$ be a functor. Then $F_*\phi(b,b)>0$ for all $b\in \Ob(\cat B)$, and $F_*\phi(b,b')=0$ whenever $\Hom(b,b')=\emptyset$.
\end{lemma}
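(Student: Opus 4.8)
The plan is to verify the two claimed properties of $F_*\phi$ directly from the defining formula \eqref{eq:image_kernel}, splitting into the cases $F_*p(b)>0$ and $F_*p(b)=0$. The key observation I would use throughout is that, by \eqref{eq:push-forward-probas}, $F_*p(b) = \sum_{a\in F^{-1}(b)} p(a)$, so $F_*p(b)>0$ if and only if there exists some $a\in F^{-1}(b)$ with $p(a)>0$; moreover $F^{-1}(b)$ is always nonempty? — no, that need not hold, so I must be careful and allow $F^{-1}(b)=\emptyset$, in which case $F_*p(b)=0$ and we fall into the second or third branch of \eqref{eq:image_kernel}.

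First I would prove $F_*\phi(b,b)>0$ for every $b\in\Ob(\cat B)$. If $F_*p(b)=0$, then by the second branch of \eqref{eq:image_kernel} we get $F_*\phi(b,b)=1>0$ immediately. If $F_*p(b)>0$, then $F_*\phi(b,b) = \big(\sum_{a'\in F^{-1}(b)}p(a')\sum_{a\in F^{-1}(b)}\phi(a,a')\big)/F_*p(b)$. Every term $\phi(a,a')$ with $a,a'\in F^{-1}(b)$ is $\geq 0$ since $\phi$ takes values in $[0,\infty)$, so the numerator is a sum of nonnegative terms; to show it is strictly positive, pick $a'\in F^{-1}(b)$ with $p(a')>0$ (which exists since $F_*p(b)>0$) and note that the diagonal term $\phi(a',a')>0$ by the hypothesis on $\phi$ in Definition \ref{def:cat_prob_triples}. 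Hence the numerator is $\geq p(a')\phi(a',a')>0$, and dividing by the positive quantity $F_*p(b)$ keeps it positive.

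Next I would prove that $\Hom(b,b')=\emptyset$ implies $F_*\phi(b,b')=0$. In that case $b\neq b'$ (there is always an identity arrow $b\to b$), so if $F_*p(b')=0$ we are in the third branch and $F_*\phi(b,b')=0$ directly. If $F_*p(b')>0$, I must show the numerator $\sum_{a'\in F^{-1}(b')}p(a')\sum_{a\in F^{-1}(b)}\phi(a,a')$ vanishes. Since each summand is nonnegative, it suffices to show $\phi(a,a')=0$ for every $a\in F^{-1}(b)$ and $a'\in F^{-1}(b')$. Suppose not: then $\phi(a,a')>0$ for some such pair, which by Definition \ref{def:cat_prob_triples} forces the existence of an arrow $a\to a'$ in $\cat A$. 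Applying the functor $F$ yields an arrow $F(a)\to F(a')$, i.e. an arrow $b\to b'$ in $\cat B$, contradicting $\Hom(b,b')=\emptyset$. Therefore the numerator is zero and $F_*\phi(b,b')=0$.

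I do not expect a genuine obstacle here; the statement is a routine sanity check that \eqref{eq:image_kernel} lands in the space of admissible $\phi$'s. The only point requiring a little care is bookkeeping the degenerate branches of the piecewise definition and remembering that functoriality of $F$ is exactly what transports the incidence condition ``$\phi(a,a')>0 \Rightarrow$ arrow $a\to a'$'' forward along $F$. The positivity of the diagonal is likewise an immediate consequence of the corresponding hypothesis on $\phi$ together with the existence of at least one object of positive $p$-mass in each fiber of positive pushed-forward mass.
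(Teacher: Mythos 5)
Your proof is correct and follows essentially the same route as the paper's: split on whether the pushed-forward mass is positive, bound the diagonal term below using a fiber element of positive probability together with $\phi(a,a)>0$, and use functoriality to transport $\Hom(b,b')=\emptyset$ back to $\phi(a,a')=0$ on the fibers. The only cosmetic difference is that you argue the second claim by contradiction and explicitly note $b\neq b'$ in the degenerate branch, whereas the paper states the implication directly.
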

\begin{proof}
    Let $b, b'$ be  objects of $\cat B$ and suppose that $F_*p(b)>0$ (otherwise $F_*\phi(b,b)>0$ and $F_*\phi(b,b')=0$ by definition). 
    
    To prove the first claim, remark that
    \begin{equation}
        F_*\phi(b,b) \geq \sum_{a\in F^{-1}(b)} p(a) \phi(a,a) \geq \min_{a\in F^{-1}(b)} \phi(a,a) F_*p(b) > 0.
    \end{equation}

If $\Hom(b,b')=\emptyset$ then $\Hom(a,a')=\emptyset$ for any $a\in F^{-1}(b)$ and $a'\in F^{-1}(b')$; it follows that $\phi(a,a')=0$ by Definition \ref{def:cat_prob_triples}. Then it is clear from \eqref{eq:image_kernel} that $F_*\theta(b,b')$ vanishes.
\end{proof}

\begin{lemma}\label{lem:functoriality}
    Let $(\cat A,p,\phi)\overset{F}{\to} (\cat B, q,\theta)\overset{G}{\to} (\cat C, r,\psi)$ be a diagram in $\cat{ProbFinCat}$. Then $(G\circ F)_*p=G_*(F_*p)=G_*q$ and $(G\circ F)_* \phi = G_*(F_*\phi) = G_*\theta$. 
\end{lemma}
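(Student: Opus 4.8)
The plan is to prove the two functoriality statements by unwinding the definitions \eqref{eq:push-forward-probas} and \eqref{eq:image_kernel}. The probability statement $(G\circ F)_*p = G_*(F_*p)$ is the classical fact that push-forward of measures is functorial: for any $c\in\Ob(\cat C)$ one has $(G\circ F)^{-1}(c) = \coprod_{b\in G^{-1}(c)} F^{-1}(b)$, so summing $p$ over the left side and regrouping gives $\sum_{b\in G^{-1}(c)} F_*p(b) = G_*(F_*p)(c)$, and since $q = F_*p$ by the assumption that $F$ is a morphism in $\cat{ProbFinCat}$, this equals $G_*q(c)$. I would state this partition of fibers once at the start, since it also drives the second identity.

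For the kernel identity $(G\circ F)_*\phi = G_*(F_*\phi)$, I would fix $c, c' \in \Ob(\cat C)$ and split into the two regimes that appear in \eqref{eq:image_kernel}. In the degenerate case $(G\circ F)_*p(c') = 0$: by the first part this equals $G_*q(c')$, so the right-hand side is also evaluated in its degenerate branch, and both sides equal $1$ if $c = c'$ and $0$ otherwise — so here there is nothing to compute. The substantive case is $(G\circ F)_*p(c') > 0$. Then $G_*q(c') > 0$ as well, but individual fibers $F^{-1}(b)$ for $b\in G^{-1}(c')$ may have $F_*p(b) = 0$; the key observation is that such $b$ contribute nothing to the relevant sums, because $F_*p(b) = 0$ forces $p(a') = 0$ for every $a'\in F^{-1}(b)$, which kills those terms in the numerator of $(G\circ F)_*\phi(c,c')$. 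I would then expand $G_*(F_*\phi)(c,c')$ using the formula, substitute the definition of $F_*\phi(b,b')$ for the non-degenerate $b'$, watch the factor $F_*p(b')$ cancel against the denominator it sits over, and use the fiber partition to recognize the resulting double sum over $b\in G^{-1}(c)$, $b'\in G^{-1}(c')$, $a\in F^{-1}(b)$, $a'\in F^{-1}(b')$ as exactly the sum over $a\in (G\circ F)^{-1}(c)$, $a'\in (G\circ F)^{-1}(c')$ appearing in $(G\circ F)_*\phi(c,c')$. Finally $G_*\theta = G_*(F_*\phi)$ since $\theta = F_*\phi$ by hypothesis.

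The main obstacle is purely bookkeeping: correctly handling the intermediate fibers $b$ with $F_*p(b) = 0$ so that the telescoping cancellation of the normalizing denominators is rigorous rather than formal. One must be careful that the cancellation $F_*p(b') \cdot \frac{(\cdots)}{F_*p(b')}$ is only performed when $F_*p(b') > 0$, and argue separately (via $p(a') = 0$ on that fiber) that the omitted terms vanish; conflating these two could make the argument circular. Everything else is a routine reindexing of finite sums, so I would present the fiber-partition lemma, dispatch the degenerate case in one line, and devote the bulk of the proof to the careful chain of equalities in the non-degenerate case.
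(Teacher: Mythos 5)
Your proposal is correct and follows essentially the same route as the paper's proof: partition the fibers $(G\circ F)^{-1}(c)=\coprod_{b\in G^{-1}(c)}F^{-1}(b)$, reindex the defining sums, and let the normalizing factors $F_*p(b')$ cancel, concluding with $q=F_*p$ and $\theta=F_*\phi$ by hypothesis. You are in fact somewhat more careful than the paper, whose displayed chain of equalities is written only in the non-degenerate regime and leaves the zero-mass cases (fibers with $F_*p(b')=0$, or $(G\circ F)_*p(c')=0$) implicit; your explicit handling of these via $p(a')=0$ on such fibers tightens the same argument rather than changing it.
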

\begin{proof}
    For any $c\in \cat C$,
    \begin{equation*}
        (G\circ F)_*p(c) = \sum_{a\in (G\circ F)^{-1}(c)} p(a) = \sum_{b\in G^{-1}(c)}\sum_{a\in F^{-1}(b)} p(a) = \sum_{b\in G^{-1}(c)} F_*(b).
    \end{equation*}

    Similarly, for any $c,c'\in \Ob (\cat C)$, 
    \begin{align*}
        (G\circ F)_*  &\phi(c,c') = \frac{1}{G_*(F_*p)(c)} \sum_{b'\in G^{-1}(c')}\sum_{a'\in F^{-1}(b')} p(a') \sum_{b\in G^{-1}(c)}\sum_{a\in F^{-1}(b)} \phi(a,a') \nonumber\\
        &= \frac{1}{G_*q(c)} \sum_{b'\in G^{-1}(c)} q(b') \sum_{b\in G^{-1}(c)} \left(\sum_{a'\in F^{-1}(b')} \frac{p(a')}{F_*p(b')} \sum_{a\in F^{-1}(b)} \phi(a,a')\right) \nonumber \\
        &=G_*(F_*\phi)(c,c') =G_*\theta(c,c').
    \end{align*}
\end{proof}

We also consider probability preserving products and weighted sums as follows. For any two categorical probabilistic triples $(\cat{A}, p, \phi), (\cat{B}, q, \theta)$, we define their probability preserving product $(\cat{A}, p, \phi) \otimes (\cat{B}, q, \theta)$ to be the triple $(\cat{A} \times \cat{B}, p\otimes q, \phi\otimes \theta)$ where for any $\langle a,b\rangle, \langle a', b'\rangle\in \Ob(\cat{A\times \cat B}),$
\begin{equation}
(p\otimes q)(\langle a,b \rangle) = p(a)q(b), \quad \text{and}\quad (\theta\otimes \phi)(\langle a,b \rangle, \langle a', b' \rangle) = \theta(a,a')\phi(b,b').
\end{equation}

Given any $\lambda \in [0,1]$, we define the weighted sum $(\cat{A}, p, \phi) \oplus_{\lambda} (\cat{B}, q, \theta)$ by $(\cat{A} \coprod \cat{B}, p\oplus_{\lambda}q, \phi\oplus \theta)$ where
\begin{equation}
(p\oplus_{\lambda}q)(x) = 
\begin{cases}
\lambda p(x) & x\in \Ob(\cat{A})\\
(1-\lambda) q(x) & x \in \Ob(\cat{B})
\end{cases}
\end{equation}
and
\begin{equation}
(\phi\oplus \theta)(x,y) = 
\begin{cases}
\phi(x,y) & x,y \in \Ob(\cat{A})\\
\theta(x,y) & x,y \in \Ob(\cat{B})\\
0 & \text{otherwise}
\end{cases}.
\end{equation}
Given morphisms $f_i:(\cat A_i,p_i\phi_i)\to (\cat B_i,q_i,\theta_i)$, for $i=1,2$, there is a unique morphism
\begin{equation}
    \lambda f_1\oplus (1-\lambda)f_2:(\cat A_1,p_1\phi_1)\oplus_{\lambda} (\cat A_2,p_2\phi_2) \to (\cat B_1,q_1,\theta_1) \oplus_{\lambda} (\cat B_2,q_2,\theta_2)
\end{equation}
that restricts to $f_1$ on $\cat A_1$ and to $f_2$ on $\cat A_2$. 

 Finally, we introduce a notion of continuity for functions defined on categorical probabilistic triples. Let $\{(\cat{A}_k, p_k, \phi_k)\}_{k\in \mathbb{N}}$ be a sequence of categorical probabilistic triples; we say that $\{(\cat{A}_k, p_k, \phi_k)\}_k$ converges to an object $(\cat{A}, p, \phi)$ of $\cat{ProbFinCat}$ if $\cat{A}_k = \cat{A}$ for sufficiently large $k$ and $\{p_k\}_k, \{\phi_k\}_k$ converge pointwise as sequences of functions to $p$ and $\phi$ respectively. A function $G: \Ob (\cat{ProbFinCat}) \to \mathbb{R}$ is continuous if for any convergent sequence $\{(A_k, p_k, \phi_k)\}_{k\in \mathbb{N}} \to (A, p, \phi)$, the sequence $\{G(A_k, p_k, \phi_k)\}_{k \in \mathbb{N}}$ converges to $G(A, p, \phi)$. 

 Similarly, a sequence of morphisms $f_n:(\cat A_n, p_n,\theta_n)\to (\cat B_n,q_n,\psi_n)$ coverges to $f:(\cat A, p,\theta)\to (\cat B,q,\psi)$ if $\cat A_n=\cat A$, $\cat B_n = \cat B$ and $f_n = f$ (as functors) for $n$ big enough, and $p_n\to p$ and $\theta_n\to \theta$ converge pointwise. Functors from $\cat{ProbFinCat}$ to a topological (semi)group|seen as a one-point category|are continuous if they are sequentially continuous. One obtains in this way a generalization of the notions of continuity discussed in \cite[p. 4]{Entropy_Characterization} to our setting, which is compatible with the faithful embedding of the category $\cat{FinProb}$ of finite probability spaces considered there into $\cat{ProbFinCat}$ which maps a finite set equipped with a probability $(A,p)$ to the triple $(\cat A, p,\delta)$, where $\cat A$  is the discrete category with object set $A$ and $\delta$ is the Kronecker delta function, given by $\delta(x,x)=1$ for all $x\in \Ob \cat A$, and $\delta(x,y)=0$ whenever $x\neq y$.

\section{Categorical entropy}\label{sec:entropy}
\label{Sec:generalized_entropy}


Let $Z$ be a general matrix in $\text{Mat}_{N\times N}([0,\infty))$ with strictly positive diagonal entries and $p$ a probability on a finite set of cardinality $N$. In the context of \cite[Ch.~6]{LeinsterDiversityBook}, which discusses measures of ecological diversity, $Z$ corresponds to a species similarity matrix and $p$ to the relative abundance of a population of $N$ different species. 
The species diversity of order $1$ is given by 
\begin{equation}
^1D^Z(p) = \prod_{i=1}^N \frac{1}{(Zp)_i^{p_i}}
\end{equation}
and its logarithm is a generalization of Shannon entropy, which is recovered when $Z$ is the identity matrix. 

Inspired by this, we consider the following entropic functional:
\begin{equation}\label{eq:def_cat_entropy}
    \mathcal H(\cat A, p,\phi) := -\sum_{a\in \Ob \cat A} p(a) \ln \left(\sum_{b \in \Ob \cat A} p(b)\phi(a,b)\right) =-\sum_{i=1}^N p_i \ln ((Z_\phi p)_i).
\end{equation}
As we required $\phi(a,a)>0$ for any $a\in \Ob (\cat{A})$, we have that $\sum_{b \in \Ob \cat A} p(b)\phi(a,b)\geq p(a)\phi(a,a) > 0$ whenever $p(a)\neq0$. In order to preserve continuity (see Proposition \ref{prop:continuity} below) while making $\mathcal{H}$ well defined, we take the convention that
\begin{equation*}
p(a)\ln\left(\sum_{b\in\Ob(\cat{A})} p(b)\phi(a,b)\right)=0
\end{equation*}
whenever $p(a)=0$. 

The rest of this section establishes certain properties of the functional $\mathcal H$. These properties only depend on the operations $\oplus$ and $\otimes$, as well as the topology on the resulting semiring (a.k.a. \emph{rig}) of categorical probabilistic triples. The morphisms in $\cat{ProbFinCat}$ only appear in the next section, in connection with an algebraic characterization of $\mathcal H$. 

The functional $\mathcal H$ generalizes Shannon entropy in the following sense.

\begin{proposition}
\label{prop:identity}
    If $\phi=\delta$, the Kronecker delta, then $\mathcal H(\cat A,p, \phi) = H(p)=-\sum p_i\ln p_i$.
\end{proposition}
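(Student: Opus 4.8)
The plan is to unwind the definition of $\mathcal H$ under the substitution $\phi = \delta$ and check that the inner sum collapses to a single term. Concretely, for each fixed $a \in \Ob(\cat A)$, the quantity $\sum_{b \in \Ob(\cat A)} p(b)\delta(a,b)$ has only one nonzero summand, namely the one with $b = a$, since $\delta(a,b) = 0$ for $b \neq a$ and $\delta(a,a) = 1$. Hence the inner sum equals $p(a)$, and substituting into \eqref{eq:def_cat_entropy} gives
\begin{equation*}
\mathcal H(\cat A, p, \delta) = -\sum_{a \in \Ob(\cat A)} p(a)\ln p(a),
\end{equation*}
which is exactly $H(p)$ as in \eqref{eq:entropy}.

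The only subtlety to address is the behavior on objects $a$ with $p(a) = 0$. On the $\mathcal H$ side, the convention introduced just before the proposition sets the $a$-th term to $0$ whenever $p(a) = 0$, regardless of the value of the logarithm's argument. On the $H(p)$ side, the standard convention is $0 \ln 0 = 0$ (equivalently, $x \ln x \to 0$ as $x \to 0^+$). I would note explicitly that these two conventions agree on the vanishing terms, so the term-by-term identification is legitimate and no issue arises from $\ln p(a)$ being undefined or $-\infty$.

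I do not expect any real obstacle here; the statement is essentially a definitional check. The one thing worth stating carefully is the reduction of the inner sum, since that is where the hypothesis $\phi(b,b') = 0$ for $b \neq b'$ (which $\delta$ satisfies) and $\phi(a,a) = 1 > 0$ (which $\delta$ also satisfies, consistent with Definition \ref{def:cat_prob_triples}) is used. A two-line proof suffices: first establish $\sum_b p(b)\delta(a,b) = p(a)$, then substitute and invoke the matching conventions on zero-probability objects.
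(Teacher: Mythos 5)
Your proof is correct and takes the same (purely definitional) route the paper intends: the paper states this proposition without any written proof, since substituting $\phi=\delta$ collapses the inner sum in \eqref{eq:def_cat_entropy} to $p(a)$ exactly as you argue. Your explicit remark that the paper's convention for $p(a)=0$ terms matches the standard $0\ln 0=0$ convention in $H(p)$ is a correct and complete way to handle the only subtlety.
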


We now consider properties of $\mathcal{H}$ that are analogous to those used to characterize Shannon entropy, see \cite{Csiszar2008}.

\begin{proposition}
    $\mathcal H((\cat A,p,\phi)\otimes (\cat B,q,\theta)) = \mathcal H(\cat A,p,\phi) + \mathcal H(\cat B,q,\theta).$
\end{proposition}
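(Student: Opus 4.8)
The plan is a direct computation exploiting the multiplicative structure of both $p\otimes q$ and $\phi\otimes\theta$. First I would expand $\mathcal H((\cat A,p,\phi)\otimes(\cat B,q,\theta))$ by applying the definition \eqref{eq:def_cat_entropy} to the triple $(\cat A\times\cat B, p\otimes q, \phi\otimes\theta)$. For a fixed pair $\langle a,b\rangle\in\Ob(\cat A\times\cat B)$ the argument of the logarithm is
\[
\sum_{\langle a',b'\rangle} (p\otimes q)(\langle a',b'\rangle)\,(\phi\otimes\theta)(\langle a,b\rangle,\langle a',b'\rangle) = \sum_{a',b'} p(a')q(b')\,\phi(a,a')\theta(b,b'),
\]
and the key point is that this double sum factors as $\bigl(\sum_{a'} p(a')\phi(a,a')\bigr)\bigl(\sum_{b'} q(b')\theta(b,b')\bigr) = (Z_\phi p)_a\,(Z_\theta q)_b$.

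Next I would use $\ln\bigl((Z_\phi p)_a (Z_\theta q)_b\bigr) = \ln (Z_\phi p)_a + \ln (Z_\theta q)_b$ and substitute back into \eqref{eq:def_cat_entropy}. The outer sum $-\sum_{\langle a,b\rangle} p(a)q(b)\,[\,\cdot\,]$ then splits into two terms; in the first the sum over $b$ of $q(b)$ equals $1$, and in the second the sum over $a$ of $p(a)$ equals $1$, so what remains is exactly $-\sum_a p(a)\ln (Z_\phi p)_a - \sum_b q(b)\ln (Z_\theta q)_b = \mathcal H(\cat A,p,\phi) + \mathcal H(\cat B,q,\theta)$.

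The only delicate point — and hence the main obstacle — is checking that the zero-convention adopted after \eqref{eq:def_cat_entropy} is respected on both sides. On the left, the term indexed by $\langle a,b\rangle$ is set to zero exactly when $(p\otimes q)(\langle a,b\rangle) = p(a)q(b) = 0$, i.e. when $p(a)=0$ or $q(b)=0$. One must verify this matches the right-hand side after factoring: if $p(a)=0$, the corresponding term of $\mathcal H(\cat A,p,\phi)$ is zero by convention, and this is unaffected by multiplying by $\sum_b q(b)=1$; if instead $p(a)>0$ but $q(b)=0$, then $(Z_\phi p)_a \geq p(a)\phi(a,a) > 0$, so $\ln (Z_\phi p)_a$ is a genuine finite number and $p(a)q(b)\ln (Z_\phi p)_a = 0$ literally. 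Thus the conventions agree termwise and the identity holds unconditionally. I expect this bookkeeping to be routine; the substance of the argument is the single factorization step above.
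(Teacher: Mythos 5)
Your proof is correct and follows essentially the same route as the paper's: expand the definition, factor the inner sum as $(Z_\phi p)_a\,(Z_\theta q)_b$, split the logarithm, and use that $p$ and $q$ sum to $1$. Your careful check that the zero-probability convention is respected termwise is a welcome extra detail that the paper leaves implicit.
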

\begin{proof}
    For simplicity, we write $\cat A_0 := \Ob(\cat A)$, etc. Recall that $(\cat A \times \cat B)_0= \cat A_0\times \cat B_0$. By a direct computation,
    \begin{multline*}
        \mathcal H((\cat A,p,\phi)\otimes (\cat B,q,\theta)) \\ = -\sum_{(a,b)  \in \cat A_0 \times \cat B_0} p(a)q(b) \ln \left(\sum_{(a',b') \in  \cat A_0 \times \cat B_0} p(a')q(b')\phi(a,a')\theta(b,b')\right)\\
        = -\sum_{(a,b)  \in \cat A_0 \times \cat B_0} p(a)q(b) \ln \left(\left(\sum_{a' \in \cat A_0 } p(a')\phi(a,a')\right)\left(\sum_{b' \in \cat B_0 } q(b')\theta(b,b')\right)\right),
    \end{multline*}
    from which the result follows. More concisely, one might remark that  $Z_{\phi\otimes \theta}$ equals $ Z_\phi \otimes Z_\theta$, the Kronecker product of matrices, and that $(Z_\phi\otimes Z_\theta)(p\otimes q) = Z_\phi p \otimes Z_\theta q$. 
\end{proof}

\begin{proposition}
\label{prop: weighted_coproduct}
Given any $\lambda \in [0,1]$, let $(\cat{2}, \Lambda, \delta)$ be the categorical probability triple where $\cat{2}$ denotes the discrete category with exactly two objects, $\Ob(\cat{2}) = \{x_1,x_2\}$, $\Lambda$ is such that $\Lambda(x_1) = \lambda, \Lambda(x_2) = 1-\lambda$, and  $\delta$ denotes the Kronecker delta. Then, 
\begin{equation*}
\mathcal{H}((\cat A,p,\phi)\oplus_{\lambda} (\cat B,q,\theta)) = \lambda \mathcal{H}(\cat A,p,\phi) + (1-\lambda)\mathcal{H}(\cat{B}, q, \theta) + \mathcal{H}(\cat{2},\Lambda, \delta).
\end{equation*}
\end{proposition}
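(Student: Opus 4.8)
The plan is to compute $\mathcal{H}$ of the coproduct triple $(\cat A\coprod\cat B,\, p\oplus_\lambda q,\, \phi\oplus\theta)$ directly from the definition \eqref{eq:def_cat_entropy}, splitting the outer sum over $\Ob(\cat A\coprod\cat B)=\Ob(\cat A)\sqcup\Ob(\cat B)$ into the contribution of the $\cat A$-objects and that of the $\cat B$-objects, and using the block-diagonal form of $\phi\oplus\theta$ to collapse the inner sums.

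Concretely, for $a\in\Ob(\cat A)$ we have $(p\oplus_\lambda q)(a)=\lambda p(a)$, and since $(\phi\oplus\theta)(a,y)=0$ for every $y\in\Ob(\cat B)$ while $(\phi\oplus\theta)(a,a')=\phi(a,a')$ for $a'\in\Ob(\cat A)$, the inner sum becomes $\sum_{y}(p\oplus_\lambda q)(y)(\phi\oplus\theta)(a,y)=\lambda\sum_{a'\in\Ob(\cat A)}p(a')\phi(a,a')$. Hence, using $\ln(\lambda t)=\ln\lambda+\ln t$ and $\sum_{a\in\Ob(\cat A)}p(a)=1$, the $\cat A$-part of $\mathcal H$ equals
\[
-\sum_{a\in\Ob(\cat A)}\lambda p(a)\ln\!\Big(\lambda\!\!\sum_{a'\in\Ob(\cat A)}\!\!p(a')\phi(a,a')\Big)=-\lambda\ln\lambda+\lambda\,\mathcal H(\cat A,p,\phi).
\]
An identical computation on the other block gives the $\cat B$-part $-(1-\lambda)\ln(1-\lambda)+(1-\lambda)\mathcal H(\cat B,q,\theta)$. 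Adding the two and observing that $-\lambda\ln\lambda-(1-\lambda)\ln(1-\lambda)=H(\Lambda)=\mathcal H(\cat 2,\Lambda,\delta)$, where the last equality is Proposition \ref{prop:identity}, yields the stated identity.

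The one point requiring care — and the only real obstacle — is the handling of zero probabilities. I would dispose of it first: whenever $\lambda p(a)=0$ the corresponding summand is $0$ by the convention fixed after \eqref{eq:def_cat_entropy}, so the splitting $\ln\big(\lambda\sum_{a'} p(a')\phi(a,a')\big)=\ln\lambda+\ln\big(\sum_{a'} p(a')\phi(a,a')\big)$ is invoked only when $\lambda>0$ and $p(a)>0$ (so that the argument is strictly positive by the positivity of the diagonal of $\phi$), and the boundary values $\lambda\in\{0,1\}$ are consistent since $t\ln t\to0$ as $t\to0^+$; the symmetric remark handles the $\cat B$-side. Everything else is a routine rearrangement of finite sums.
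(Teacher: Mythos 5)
Your proof is correct and follows essentially the same route as the paper's: a direct computation splitting the sum over the two blocks of $\Ob(\cat A)\sqcup\Ob(\cat B)$, pulling $\ln\lambda$ and $\ln(1-\lambda)$ out using $\sum_a p(a)=\sum_b q(b)=1$, and identifying the leftover $-\lambda\ln\lambda-(1-\lambda)\ln(1-\lambda)$ with $\mathcal H(\cat 2,\Lambda,\delta)$. Your explicit treatment of the zero-probability and $\lambda\in\{0,1\}$ cases is a careful addition that the paper leaves implicit, but it does not change the argument.
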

\begin{proof}
By a direct computation: $\mathcal{H}((\cat A,p,\phi)\oplus_{\lambda} (\cat B,q,\theta))$ equals
\begin{multline*}
 -\sum_{a\in \Ob(\cat{A})}\lambda p(a) \ln\left(\sum_{b\in \Ob(\cat{A})}\lambda p(b)\phi(a,b)  \right)
\\ \qquad - \sum_{x\in \Ob(\cat{B})}(1-\lambda)q(x) \ln \left(\sum_{y\in \Ob(\cat{B})} (1-\lambda)q(y)\theta(x,y)\right) 
\end{multline*}
from which the result follows, because $\mathcal H(\cat 2,\Lambda,\delta)=- \lambda\ln(\lambda)-(1-\lambda)\ln(1-\lambda)$. Similar to the product case, one might remark that $Z_{\phi\oplus \theta} = Z_\phi \oplus Z_\theta$ and $(Z_\phi \oplus Z_\theta)(p\oplus_{\lambda}q) = Z_\phi(\lambda p) \oplus Z_\theta((1-\lambda)q)$. 
\end{proof}

More generally, given a finite collection $(\cat{A}_1,p_1,\phi_1),\ldots, (\cat{A}_m, p_m, \phi_m)$   of categorical probabilistic triples and a probability vector $(\lambda_1,...,\lambda_m)$, we can define 
\begin{equation*}
\bigoplus_{i=1}^m \lambda_i (\cat{A}_i, p_i, \phi_i) = \left(\coprod_{i=1}^m \cat{A}_i, \bigoplus_{i=1}^m\lambda_i p_i, \bigoplus_{i=1}^m \phi_i\right)
\end{equation*}
where $
(\bigoplus_{i=1}^m\lambda_i p_i)(x) = 
\lambda_ip_i(x)$ for $ x \in \Ob(\cat{A}_i)$,  and 
\begin{equation}
    \left(\bigoplus_{i=1}^m \phi_i\right)(x,y) = \begin{cases}
    \phi_i(x,y) & \text{if }x,y \in \Ob(\cat{A}_i)\\
    0& \text{otherwise}
\end{cases} .
\end{equation}

\begin{proposition}
\label{prop:family_weighted_coprod}
Take notation as in the preceding paragraph. Let $(\cat{m}, \Lambda, \delta)$ be the categorical probability triple where $\cat{m}$ is the finite discrete category of cardinality $m$, with objects $\{x_1,\ldots, x_m\}$,  $\Lambda(x_i) = \lambda_i$, and $\delta$ is again the Kronecker delta. Then,
\begin{equation*}
\mathcal{H}\left(\bigoplus_{i=1}^m \lambda_i (\cat{A}_i, p_i, \phi_i)\right)
= \sum_{i=1}^m \lambda_i \mathcal{H}(\cat{A}_i,p_i, \phi_i) + \mathcal{H}(\cat{m}, \Lambda, \delta).
\end{equation*}
\end{proposition}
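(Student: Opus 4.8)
The plan is to proceed by direct computation, mirroring the proof of Proposition~\ref{prop: weighted_coproduct} but keeping track of the $m$ summands simultaneously rather than running an induction. Write $B_i := \Ob(\cat{A}_i)$ and recall that the object set of $\coprod_{i=1}^m \cat{A}_i$ is the disjoint union $\coprod_i B_i$, so every object $x$ of the coproduct lies in exactly one $B_i$. The key structural fact is that $(\bigoplus_j \phi_j)(x,y)$ vanishes unless $x$ and $y$ belong to the \emph{same} block $B_i$, in which case it equals $\phi_i(x,y)$; likewise $(\bigoplus_j \lambda_j p_j)(y) = \lambda_i p_i(y)$ for $y \in B_i$. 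Substituting these into the definition~\eqref{eq:def_cat_entropy} of $\mathcal H$ collapses the inner sum over $y$ to a sum over the block $B_i$ containing $x$, giving
\begin{equation*}
\mathcal{H}\left(\bigoplus_{i=1}^m \lambda_i (\cat{A}_i, p_i, \phi_i)\right) = -\sum_{i=1}^m \sum_{x \in B_i} \lambda_i p_i(x) \ln\!\left(\lambda_i \sum_{y \in B_i} p_i(y)\,\phi_i(x,y)\right).
\end{equation*}

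Next I would split the logarithm as $\ln \lambda_i + \ln\big(\sum_{y \in B_i} p_i(y)\phi_i(x,y)\big)$ and distribute the sum. The second piece reassembles, block by block, into $\sum_{i=1}^m \lambda_i \mathcal H(\cat A_i, p_i, \phi_i)$. The first piece gives $-\sum_{i=1}^m \lambda_i \ln \lambda_i \sum_{x \in B_i} p_i(x) = -\sum_{i=1}^m \lambda_i \ln \lambda_i$, since each $p_i$ is a probability on $B_i$; by Proposition~\ref{prop:identity} applied to the discrete category $\cat m$ this is exactly $H(\Lambda) = \mathcal H(\cat m, \Lambda, \delta)$. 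Adding the two contributions yields the claimed identity.

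The only point requiring care is the treatment of degenerate terms. If $\lambda_i = 0$ for some $i$, or if $p_i(x) = 0$ for some $x \in B_i$, then the corresponding coefficient $(\bigoplus_j \lambda_j p_j)(x) = 0$ and the convention adopted after~\eqref{eq:def_cat_entropy} sets that whole term to $0$; the splitting of the logarithm should then be read as an identity among the terms with nonzero weight only, which is harmless since every zero-weight term contributes $0$ to both sides (including the $-\lambda_i \ln \lambda_i$ contributions, under the usual convention $0\ln 0 = 0$ built into $H$). I do not anticipate a genuine obstacle here. An alternative, slightly longer route would be an induction on $m$, writing $\bigoplus_{i=1}^m \lambda_i(\cat A_i,p_i,\phi_i)$ as $(\cat A_1,p_1,\phi_1) \oplus_{\lambda_1} \big(\bigoplus_{i=2}^m \tfrac{\lambda_i}{1-\lambda_1}(\cat A_i,p_i,\phi_i)\big)$, applying Proposition~\ref{prop: weighted_coproduct} together with the induction hypothesis, and then invoking the grouping (recursivity) property of Shannon entropy for the $\mathcal H(\cat m, \Lambda, \delta)$ term; the direct computation is preferable since it avoids isolating that property and dispenses with the $\lambda_1 = 1$ edge case automatically.
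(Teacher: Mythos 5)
Your proof is correct and follows essentially the same route as the paper: a direct computation exploiting the block structure of $\bigoplus_i \phi_i$ and $\bigoplus_i \lambda_i p_i$ (the paper phrases this as $Z_{\bigoplus_i \phi_i} = \bigoplus_i Z_{\phi_i}$ and $(\bigoplus_i Z_{\phi_i})(\bigoplus_i \lambda_i p_i) = \bigoplus_i Z_{\phi_i}(\lambda_i p_i)$, mimicking the two-summand case). Your extra care with the $\lambda_i = 0$ and $p_i(x) = 0$ conventions is a welcome, if minor, addition of detail.
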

\begin{proof}
Similar to the proof of Proposition \ref{prop: weighted_coproduct}, observe that $Z_{\bigoplus_{i=1}^m \phi_i} = \bigoplus_{i=1}^m Z_{\phi_i}$ and $(\bigoplus_{i=1}^m Z_{\phi_i})(\bigoplus_{i=1}^m \lambda_i p_i) = \bigoplus_{i=1}^m Z_{\phi_i}(\lambda_i p_i)$. 
\end{proof}

\begin{proposition}
\label{prop:continuity}
The entropy functional $\mathcal{H}$ is continuous. 
\end{proposition}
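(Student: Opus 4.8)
The plan is to unwind the definition of convergence in $\cat{ProbFinCat}$ and reduce the statement to an elementary real-analysis estimate carried out term by term. Let $\{(\cat A_k, p_k, \phi_k)\}_k$ converge to $(\cat A, p, \phi)$. By definition of convergence, $\cat A_k = \cat A$ for all $k$ beyond some index, so after discarding finitely many terms we may assume $\cat A_k = \cat A$ throughout; fix an enumeration $\Ob(\cat A) = \{a_1, \ldots, a_N\}$, regard $p_k, p$ and $\phi_k, \phi$ as the associated finite families of real numbers, and recall that $p_k \to p$ and $\phi_k \to \phi$ componentwise. We must show that
\[
-\sum_{i=1}^N (p_k)_i \ln\bigl((Z_{\phi_k} p_k)_i\bigr) \;\longrightarrow\; -\sum_{i=1}^N p_i \ln\bigl((Z_\phi p)_i\bigr),
\]
under the convention that a summand is $0$ whenever its leading probability vanishes. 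Since each sum is finite, it suffices to prove convergence of the $i$-th summand for every $i$, and to this end I would split $\{1, \ldots, N\}$ into $I_+ = \{i : p_i > 0\}$ and $I_0 = \{i : p_i = 0\}$.

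For $i \in I_+$ the argument is just continuity of a composition of continuous functions. Componentwise convergence of $\phi_k$ and $p_k$ gives $(Z_{\phi_k} p_k)_i = \sum_j \phi_k(a_i, a_j)(p_k)_j \to \sum_j \phi(a_i, a_j) p_j = (Z_\phi p)_i$, and this limit is strictly positive because $(Z_\phi p)_i \geq p_i\,\phi(a_i, a_i) > 0$ by Definition \ref{def:cat_prob_triples}. As $\ln$ is continuous on $(0, \infty)$ and $(p_k)_i \to p_i$, the $i$-th summand converges to $p_i \ln\bigl((Z_\phi p)_i\bigr)$.

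The only delicate point is $i \in I_0$, for which the limiting summand is $0$ and we must verify that $(p_k)_i \ln\bigl((Z_{\phi_k} p_k)_i\bigr) \to 0$ even though $(Z_{\phi_k} p_k)_i$ may itself tend to $0$, forcing the logarithm to diverge. For those $k$ with $(p_k)_i = 0$ the summand is $0$ by convention, so assume $(p_k)_i > 0$. On the one hand $(Z_{\phi_k} p_k)_i = \sum_j \phi_k(a_i, a_j)(p_k)_j$ is bounded above by some $M > 0$, since the $\phi_k$ converge and each $p_k$ is a probability; on the other hand $(Z_{\phi_k} p_k)_i \geq \phi_k(a_i, a_i)(p_k)_i \geq c\,(p_k)_i$ with $c := \phi(a_i, a_i)/2 > 0$, valid once $k$ is large enough that $\phi_k(a_i, a_i) \geq \phi(a_i, a_i)/2$. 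Hence $\ln c + \ln (p_k)_i \leq \ln\bigl((Z_{\phi_k} p_k)_i\bigr) \leq \ln M$, so that for all large $k$ with $(p_k)_i > 0$,
\[
\Bigl| (p_k)_i \ln\bigl((Z_{\phi_k} p_k)_i\bigr) \Bigr| \;\leq\; (p_k)_i \bigl( |\ln M| + |\ln c| + |\ln (p_k)_i| \bigr),
\]
which tends to $0$ because $(p_k)_i \to 0$ and $x \ln x \to 0$ as $x \to 0^+$. This proves the $i$-th summand converges for every $i$, hence $\mathcal H$ is continuous. The main obstacle is exactly this last estimate: it is where the hypothesis $\phi(a, a) > 0$ enters essentially (it bounds the logarithm from below by something comparable to $\ln (p_k)_i$) together with the classical fact that $x \ln x \to 0$; everything else is routine bookkeeping.
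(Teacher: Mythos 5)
Your proof is correct and follows essentially the same route as the paper: reduce to termwise convergence, treat $p_i>0$ by continuity of $\ln$ (using $\phi(a_i,a_i)>0$ for positivity), and handle $p_i=0$ via the lower bound $(Z_{\phi_k}p_k)_i\geq \phi_k(a_i,a_i)(p_k)_i$ together with $x\ln x\to 0$. The only cosmetic difference is that you dispatch the $p_i=0$ case with a single two-sided bound on $\ln\bigl((Z_{\phi_k}p_k)_i\bigr)$, whereas the paper splits it into the subcases $f_\infty(a)>0$ and $f_\infty(a)=0$; the substance is the same.
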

\begin{proof}
Let $\{(\cat{A}_k, p_k, \phi_k)\}_{k\in\mathbb{N}}$ be a sequence of categorical probabilistic triples converging to the triple $(\cat{A}_\infty, p_\infty, \phi_\infty)$. It follows that, for any $a\in \Ob \cat A$, the sequence $f_k(a):=\sum_{b\in \Ob (\cat A)} p_k(b) \phi_k(a,b)$ converges pointwise to $f_\infty$ as $k\to \infty$. Since we may rewrite
\begin{equation*}
\mathcal{H}(\cat{A}_k, p_k, \phi_k) = -\sum_{a\in \Ob(\cat{A})} p_k(a)\ln(f_k(a))
\end{equation*}
for $k=0, 1, 2, 3,...,\infty$, it suffices to show that $\{p_k(a)\ln(f_k(a))\}_{k\in\mathbb{N}}$ converges to $p_\infty(a)\ln(f_\infty(a))$ for each $a \in \Ob(\cat{A})$. 

Fix $a \in \Ob(\cat{A})$. Assume first that $p_\infty(a)>0$. Then for sufficiently large $k$, $p_k(a)>0$, and hence $f_k(a) \geq p_k(a)\phi_k(a,a) > 0$ and $f_\infty(a) \geq p_\infty(a)\phi_\infty(a,a) > 0$. By the continuity of $\ln(x)$, we then have that $\ln(f_k(a))$ converges to $\ln(f_\infty(a))$, hence $
\lim_{k \to \infty} p_k(a)\ln(f_k(a)) = p_\infty(a)\ln(f_\infty(a)). $

Assume now that $p_\infty(a) = 0$. If $f_\infty(a) > 0$, then $f_k(a)>0$ for sufficiently large $k$, so we may again use the continuity of $\ln(x)$. 
If $f_\infty(a) = 0$, pick $1 > \epsilon, \epsilon_0 > 0$ so that there exists $N \in \mathbb{N}$ such that $f_k(a) < \epsilon$ and $\phi_k(a,a) > \epsilon_0$ for all $k > N$. Then,
\begin{equation*}
1 > \epsilon > f_k(a) \geq p_k(a)\phi_k(a,a) \geq p_k(a)\epsilon_0.
\end{equation*}
We deduce from this that
\begin{equation*}
0 \geq p_k(a)\ln(f_k(a)) \geq p_k(a)\ln(p_k(a)) + p_k(a)\ln(\epsilon_0). 
\end{equation*}
 Using that $\lim_{x\to 0^+} x\ln(x) = 0$, we conclude that 
$\lim_{k \to \infty} p_k(a)\ln(f_k(a))= 0 = p_\infty(a)\ln(f_\infty(a)).$
We thus have that $\mathcal{H}$ is continuous. 
\end{proof}

We take a brief detour to recall a definition of magnitude for a matrix. 
\begin{definition}
\label{def: magnitude_matrix}
Let $M$ be a matrix in $\text{Mat}_{n\times n}(\mathbb{R})$. Denote by $1_n \in \text{Mat}_{n\times 1}(\mathbb{R})$ the column vector of ones. 
A matricial weighting of $M$ is a (column) vector $w \in \text{Mat}_{n\times 1}(\mathbb{R})$ such that $Mw=1_n$
Similarly, a matricial coweighting of $M$ is a  vector $\bar w \in \text{Mat}_{n\times 1}(\mathbb{R})$ such that $\bar w^T M = 1_n$.

 If $M$ has both a weighting $w$ and a coweighting  $\bar w$, then we say that $M$ has magnitude, with the magnitude of $M$ given by
\begin{equation}\label{eq:magnitude_matrix}
\norm{M}:= \sum_{i=1}^n w_i = \sum_{i=1}^n \bar w_i.
\end{equation}
\end{definition}

In fact, it follows easily from the definitions that if $M$ has both a weighting and a coweighting the sums in \eqref{eq:magnitude_matrix} are equal.

\begin{proposition}
\label{prop:general_mag}
    Let $(\cat A, p,\phi)$ be a probabilistic category. Suppose $Z_\phi$ has magnitude (i.e. has a weighting and a coweighting). If $Z_\phi$ has a nonnegative weighting $w$, then $u = w/\norm{Z_\phi}$ is a probability distribution and $
        \mathcal H(\cat A,u, \phi) = \ln \norm{Z_\phi}.$
\end{proposition}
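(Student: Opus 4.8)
The plan is to verify the two assertions in turn, working directly from the definitions.

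First I would check that $u = w/\norm{Z_\phi}$ is a probability distribution. By Definition~\ref{def: magnitude_matrix}, $w$ is a weighting of $Z_\phi$, so $Z_\phi w = 1_N$, and by hypothesis all entries $w_i \geq 0$. The sum $\sum_i w_i$ equals $\norm{Z_\phi}$ by \eqref{eq:magnitude_matrix}. To conclude that $u$ is a genuine probability distribution I must know that $\norm{Z_\phi} > 0$; since $Z_\phi$ has nonnegative entries with strictly positive diagonal, and $Z_\phi w = 1_N$ forces $w \neq 0$, nonnegativity of $w$ gives $\norm{Z_\phi} = \sum_i w_i > 0$. Hence $u_i = w_i/\norm{Z_\phi} \geq 0$ and $\sum_i u_i = 1$, so $u$ is a probability on $\Ob(\cat A)$.

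Next I would compute $\mathcal H(\cat A, u, \phi)$ from \eqref{eq:def_cat_entropy}. The key observation is that $(Z_\phi u)_i = \frac{1}{\norm{Z_\phi}} (Z_\phi w)_i = \frac{1}{\norm{Z_\phi}}$ for every $i$, because $Z_\phi w = 1_N$. Substituting into \eqref{eq:def_cat_entropy},
\begin{equation*}
\mathcal H(\cat A, u, \phi) = -\sum_{i=1}^N u_i \ln\left(\frac{1}{\norm{Z_\phi}}\right) = \ln \norm{Z_\phi} \sum_{i=1}^N u_i = \ln\norm{Z_\phi},
\end{equation*}
using $\sum_i u_i = 1$. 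I should note that the convention on $0 \cdot \ln(\cdots)$ is not even needed here, since $(Z_\phi u)_i$ is strictly positive for all $i$ regardless of whether $u_i = 0$; but it does no harm to invoke it for the indices with $u_i = 0$.

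I do not expect a serious obstacle: the proof is essentially the remark that a normalized nonnegative weighting pushes $Z_\phi$ to the constant vector $\norm{Z_\phi}^{-1} 1_N$, after which the logarithms in $\mathcal H$ all collapse to a single constant. The only point requiring a moment's care is the positivity of $\norm{Z_\phi}$ needed to normalize $w$; this is where the hypothesis that $w$ is \emph{nonnegative} (rather than merely a weighting) is used, together with the fact that a weighting cannot be the zero vector. It is worth remarking that, when $\cat A$ is a category admitting a nonnegative weighting in the sense of \eqref{eq: weighting} and $\phi = \zeta$, this $u$ is precisely the "uniform" probability alluded to in the introduction, and $\norm{Z_\zeta} = \chi(\cat A)$, so the proposition indeed recovers $\ln \chi(\cat A)$; similarly, taking $\phi = \delta$ gives $Z_\delta = I_N$, $w = 1_N$, $u$ the usual uniform distribution, and $\mathcal H(\cat A, u, \delta) = \ln N$, consistent with Proposition~\ref{prop:identity}.
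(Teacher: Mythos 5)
Your proof is correct and follows the same route as the paper: normalize the nonnegative weighting (noting $\norm{Z_\phi}>0$), observe that $Z_\phi u$ is the constant vector $\norm{Z_\phi}^{-1}1_N$, and collapse the sum in \eqref{eq:def_cat_entropy} to $\ln\norm{Z_\phi}$. Your explicit justification that a weighting cannot be the zero vector, and the closing remarks linking back to $\chi(\cat A)$ and Proposition~\ref{prop:identity}, are just slightly more detailed versions of what the paper leaves implicit.
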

\begin{proof}
Remark that if the weighting is nonnegative then necessarily $\norm{Z_\phi}>0$. Additionally, 
\begin{equation}
    \mathcal H(\cat A,u, \phi)= -\sum_i \frac{w_i}{\norm{Z_\phi}} \ln\left(\frac 1{\norm{Z_\phi}}\right).
\end{equation}
\end{proof}

Specialized to the case $\phi=\zeta$, Proposition \ref{prop:general_mag} tells us that if $\cat A$ has magnitude, and the category (equivalently: the matrix $Z$ representing $\zeta$) has a positive weighting $w$, then $u=w/\chi(\cat A)$ satisfies $\mathcal H(\cat A, u ,\zeta) = \ln \chi(\cat A)$. Therefore the categorical entropy generalizes both Shannon entropy and the logarithm of the categorical magnitude, as we wanted.

\begin{remark}
    A known result on the maximization of diversity \cite[Thm. 2]{leinster2016maximizing} can be restated as follows: the supremum of  $\mathcal H(\cat A, p,\phi)$ over all probability distributions $p$ on $\Ob (\cat A)$ equals $\ln(\max_{B} \norm{Z_B})$,
    where the maximum is taken over all subsets $B$ of $\{1,...,n\}$ such that the submatrix $Z_B:=((Z_\phi)_{i,j})_{i,j\in B}$ of $Z_\phi$ has a nonnegative weighting.
\end{remark}

\begin{remark}
    One might generalize the definitions, allowing $p$ to be a \emph{signed probability}: a function $p:\Ob(\cat{A})\to \mathbb R$ such that $\sum_{a\in \Ob(\cat{A})} p(a)=1$. In this case, we define $\mathcal H(\cat A, p,\phi)$ as $-\sum_{i=1}^N p_i \ln |(Z_\phi p)_i|.$ Then the previous proposition generalizes as follows: if $\norm {Z_\phi}\neq 0$ and $w$ is any weighting, the vector $u=w/\norm{Z_\phi}$ is a signed probability and $\mathcal H(\cat A,u, \phi) = \ln | \norm{Z_\phi}|$.
\end{remark}

\section{Towards a characterization of the categorical entropy}\label{sec:special_cases}

Let $\cat R_+$ be the additive semigroup of non-negative real numbers seen as a one-object category, that is, $\Ob (\cat R_+) =\{\ast\}$  and $\Hom(\ast,\ast)=[0,\infty)$, with $+$ as composition of arrows. A functor $F:\cat{ProbFinCat}\to \cat R_+$ is:
\begin{itemize}
    \item \textbf{convex-linear} if for all morphisms $f$, $g$ and all scalars $\lambda\in[0,1]$, $$F(\lambda f \oplus (1-\lambda) q) =\lambda F(f) + (1-\lambda) F(g).$$
    \item \textbf{continuous} if $F(f_n)\to F(f)$ whenever $f_n$ is a sequence of morphisms converging to $f$ (see Section \ref{sec:prob_cat}).
\end{itemize}

Via the embedding described at the end of Section \ref{sec:prob_cat}, we obtain similar definitions for functors $F:\cat{FinProb}\to \cat R_+$, which correspond to those introduced by Baez, Fritz, and Leinster in \cite[p. 4]{Entropy_Characterization}. They showed there that if a functor $F:\cat{FinProb}\to\cat R_+$ is convex linear and continuous, then there exists a constant $c\geq 0$ such that  each arrow $f:(A,p)\to (B,q)$ is mapped to  $F(f) = c(H(p)-H(q))$. This is an algebraic characterization of the \emph{information loss} $H(p)-H(q)$ given by Shannon entropy.

In our setting, the ``loss'' functor $L:\cat{ProbFinCat}\to \cat{R_+}$ that maps $f:(\cat A,p,\theta)\to (\cat B,q,\psi)$ to $L(f) = \mathcal H(\cat A,p,\theta)- \mathcal H(\cat B,q,\psi)$ is continuous and convex linear: this can be easily seen from Propositions \ref{prop:family_weighted_coprod} and \ref{prop:continuity} above. However, are all the convex-linear continuous functors $G:\cat{ProbFinCat}\to \cat{R_+}$ positive multiples of $L$? 

Baez, Fritz, and Leinster's proof  uses crucially that $\cat{FinProb}$ has a terminal object. $\cat{ProbFinCat}$ has no terminal object: even a category with a unique object $\ast$ accepts an arbitrary value of $\theta(\ast,\ast)$. To fix this, we introduce the full subcategory $\cat{TransFinCat}$ of $\cat{ProbFinCat}$ given by triples $(\cat{A}, p, \phi)$ such that $\phi$ is a transition kernel; this means that for all $a\in \Ob (\cat A)$, the function $b\mapsto \phi(b,a)$ is a probability measure. It is easy to verify that if $F: (\cat{A}, p, \phi) \to (\cat{B}, q, \theta)$ is a morphism in $\cat{ProbFinCat}$ and $\phi$ is a transition kernel, then $\theta$ is a transition kernel too. 


Note that the embedding of $\cat{FinProb}$ into $\cat{ProbFinCat}$ described previously induces a fully faithful embedding of $\cat{FinProb}$ into $\cat{TransFinCat}$; moreover, when $\cat{A}$ is a discrete category, the only choice of transition kernel is given by the Kronecker delta $\delta$.

If $p$ represents 
the probability of each object at an initial time, we can regard $\hat p:\Ob (\cat A)\to [0,1], \, a\mapsto \sum_{b\in \Ob (\cat A)} \phi(a,b)p(b)$ as the probability of each object after one transition has happened. Quite remarkably, there is a compatibility between the push-forward of probabilities \eqref{eq:push-forward-probas} and the push-forward of transition kernels \eqref{eq:image_kernel} in the following sense: if $F:(\cat A, p, \phi)\to (\cat B, q, \theta)$ is a morphism in $\cat{TransFinCat}$, then $\hat q = F_* \hat p$, because
    \begin{align}
    \hat q(b) & := \sum_{b'\in \Ob (\cat B)} \theta(b,b') q(b')\\
    &= \sum_{b'\in \Ob(\cat B)} \left(\frac{\sum_{a'\in F^{-1}(b') }p(a')\sum_{a\in F^{-1}(b) } \phi(a,a')}{q(b')}\right) q(b')\\
    &=\sum_{a\in F^{-1}(b)} \sum_{a'\in \Ob(\cat A)} \phi(a,a') p(a')=\sum_{a\in F^{-1}(b)} \hat p(a).
\end{align}
In turn, the entropy \eqref{eq:def_cat_entropy} can be rewritten as
\begin{equation}
    \mathcal H(\cat A, p,\phi) = -\sum_{a\in \Ob\cat A} p(a) \ln \hat p(a) = H(p)-D(\hat p|p),
\end{equation}
where  $H(p)$ is Shannon entropy \eqref{eq:entropy}, and $D(\hat p|p) = \sum_{a\in \Ob \cat A} p(a)\ln(\hat p(a)/p(a))$ is the Kullback-Leibler divergence

Let us return to the characterization problem described above. Let $G:\cat{TransFinCat}\to \cat R_+$ be a functor that is convex linear, continuous, and possibly subject to other constraints. Let $\top$ be the triple (one-point discrete category, trivial probability, trivial transition kernel), which is the terminal object of $\cat{TransFinCat}$. Denote by $I_G(\cat A, p,\phi)$ the image under $G$ of the unique morphism $(\cat A, p,\phi)\to \top$. The functoriality of $G$ implies that, for any morphism $f:(\cat A, p,\phi)\to (\cat B, q,\theta)$, one has $G(f) + I_G(\cat B, q,\theta) = I_G(\cat A, p,\phi)$. In particular, $G(\id_{(\cat A,p,\phi)})=0$ for any triple $(\cat A,p,\phi)$ and $I_G(\top)=0$. Moreover, if $f$ is invertible, then $G(f)+G(f^{-1})=0$, so $G(f)=0$ and $I_G$ is invariant under isomorphisms. In turn, convex linearity implies that 
\begin{equation}\label{eq:chain_rule}
    I_G\left(\bigoplus_{i=1}^m \lambda_i (\cat A_i, p_i,\phi_i)\right)= I_G(\cat m, (\lambda_1,...,\lambda_m),\delta) + \sum_{i=1}^m \lambda_i I_G (\cat A_i, p_i,\phi_i),
\end{equation}
for any vector of probabilities $(\lambda_1,...,\lambda_m)$ and any $(\cat A_i, p_i,\phi_i)\in \Ob (\cat{TransFinCat})$, $i=1,..,m$. This is a system of functional equations reminiscent of those used by Shannon, Khinchin, Fadeev, etc. to characterize the entropy \cite{Shannon,Csiszar2008}, see in particular \cite[Thm. 6]{Entropy_Characterization}. It is not clear if a similar result holds for $\mathcal H$ in our categorical setting. A fundamental difference is that every finite probability space $(A,p)$ can be expressed nontrivially in many ways as convex combinations $ \bigoplus_{i=1}^m \lambda_i ( A_i, p_i)$ in $\cat{FinProb}$, but this is not true for categorical probabilistic triples. Remark also that the function $I_G(\cat A,p,\phi) = -\sum_{a\in \Ob (\cat A)} p(a)\ln(\sum_{b\in \cat{A}} \phi(b,a)p(b))$ defines a continuous, convex-linear functor $G$.  Hence $G$  needs to satisfy some additional properties in order to recover $L$ up to a positive multiple.


\bibliographystyle{unsrt}
\bibliography{entropy_of_categories}

\end{document}